\documentclass[a4paper,11pt]{article}
\usepackage{amssymb,amsthm,amsmath,enumerate}

\addtolength{\hoffset}{-0.5cm} \addtolength{\textwidth}{1.0cm}
\addtolength{\voffset}{-0.5cm} \addtolength{\textheight}{1.0cm}

    \newtheorem{theorem}{Theorem}[section]
    \theoremstyle{definition}
    \newtheorem{example}[theorem]{Example}

    \newcommand{\is}{ \mathrm{IS} }
    \newcommand{\dc}{ \mathrm{DC} }
    \newcommand{\ic}{ \mathrm{IC} }
    \newcommand{\ub}{ \mathrm{ub} }
    \newcommand{\infi}{ \infty }

\begin{document}

\title{On Maximal Chains of Systems of Word Equations%
    \thanks{Supported by the Academy of Finland under grant 121419}
}

\author{Juhani Karhum\"{a}ki and Aleksi Saarela \\
    Department of Mathematics and Statistics \\
    University of Turku,
    20014 Turku, Finland \\
}

\date{
    Originally published in 2011\\
    Note added in 2015
}

\maketitle

\begin{abstract}
We consider systems of word equations and their solution sets. We
discuss some fascinating properties of those, namely the size of
a maximal independent set of word equations, and proper chains of
solution sets of those. We recall the basic results, extend some
known results and formulate several fundamental problems of the
topic.

Keywords: word equations, independent systems, solution chains
\end{abstract}

\section{Introduction}

Theory of word equations is a fundamental part of combinatorics on
words. It is a challenging topic of its own which has a number of
connections and applications, e.g., in pattern unification and group
representations. There have also been several fundamental
achievements in the theory over the last few decades.

Decidability of the existence of a solution of a given word equation
is one fundamental result due to Makanin \cite{Ma77}. This is in
contrast to the same problem on Diophantine equations, which is
undecidable \cite{Ma70}. Although the complexity of the above
\emph{satisfiability problem} for word equations is not known, a
nontrivial upper bound has been proved: it is in PSPACE \cite{Pl04}.

Another fundamental property of word equations is the so-called
\emph{Ehrenfeucht compactness property}. It guarantees that any
system of word equations is equivalent to some of its finite
subsystems. The proofs (see \cite{AlLa85ehrenfeucht} and \cite{Gu86}) are based
on a transformation of word equations into Diophantine equations and
then an application of Hilbert's basis theorem. Although we have
this finiteness property, we do not know any upper bound, if it exists,
for the size of an equivalent subsystem in terms of the number of
unknowns. And this holds even in the case of three unknown systems
of equations. In free monoids an equivalent formulation of the
compactness property is that each \emph{independent} system of word
equations is finite, independent meaning that the system is not
equivalent to any of its proper subsystems. We analyze in this paper
the size of the maximal independent systems of word equations.

As a related problem we define the notion of \emph{decreasing
chains} of word equations. Intuitively, this asks how long chains of
word equations exist such that the set of solutions always properly
diminishes when a new element of the chain is taken into the system.
Or more intuitively, how many proper constraints we can define such
that each constraint reduces the set of words satisfying these
constraints. It is essentially the above compactness property which
guarantees that these chains are finite.

Another fundamental property of word equations is the result of
Hmelevskii \cite{Hm71} stating that for each word equation with
three unknowns its solution set is \emph{finitely parameterizable}.
This result is not directly related to our considerations, but its
intriguity gives, we believe, a strong explanation and support to
our view that our open problems, even the simplest looking ones, are
not trivial. Hmelevskii's argumentation is simplified in the extended abstract
\cite{KaSa08dlt}, and used in
\cite{Sa09dlt} to show that the satisfiability problem for three
unknown equations is in NP.
A full version of these two conference articles has been submitted \cite{KaSa15}.

The goal of this note is to analyze the above maximal independent
systems of equations and maximal decreasing chains of word
equations, as well as search for their relations. An essential part
is to propose open problems on this area. The most fundamental
problem asks whether the maximal independent system of word
equations with $n$ unknowns is bounded by some  function of $n$.
Amazingly, the same problem is open for three unknown equations,
although we do not know larger than three equation systems in this
case.

\section{Systems and Chains of Word Equations}

The topics of this paper are independent systems and chains of
equations in semigroups. We are mostly interested in free monoids;
in this case the equations are constant-free word equations. We
present some questions about the sizes of such systems and chains,
state existing results, give some new ones, and list open problems.

Let $S$ be a semigroup and $\Xi$ be an alphabet of variables. We
consider equations $U=V$, where $U,V \in \Xi^+$. A morphism $h:
\Xi^+ \to S$ is a \emph{solution} of this equation if $h(U) =
h(V)$. (If $S$ is a monoid, we can use $\Xi^*$ instead of $\Xi^+$.)

A system of equations is \emph{independent}
if it is not equivalent to any of its proper subsystems.
In other words, equations $E_i$
form an independent system of equations if for every $i$ there is a
morphism $h_i$ which is not a solution of $E_i$ but which is a
solution of all the other equations. This definition works for both
finite and infinite systems of equations.

We define \emph{decreasing chains} of equations. A finite sequence
of equations $E_1, \dots, E_m$ is a decreasing chain if for every
$i \in \{0, \dots, m-1\}$ the system $E_1, \dots, E_i$ is
inequivalent to the system $E_1, \dots, E_{i+1}$. An infinite
sequence of equations $E_1, E_2, \dots$ is a decreasing chain if
for every $i \geq 0$ the system $E_1, \dots, E_i$
is inequivalent to the system $E_1, \dots, E_{i+1}$.

Similarly we define \emph{increasing chains} of equations. A
sequence of equations $E_1, \dots, E_m$ is an increasing chain if
for every $i \in \{1, \dots, m\}$ the system $E_i, \dots, E_m$ is
inequivalent to the system $E_{i+1}, \dots, E_m$. An infinite
sequence of equations $E_1, E_2, \dots$ is an increasing chain if
for every $i \geq 1$ the system $E_i, E_{i+1}, \dots$ is
inequivalent to the system $E_{i+1}, E_{i+2}, \dots$.

Now $E_1, \dots, E_m$ is an increasing chain if and only if $E_m,
\dots, E_1$ is a decreasing chain. However, for infinite chains
these concepts are essentially different. Note that a chain can be
both decreasing and increasing, for example, if the equations form an
independent system.

We will consider the \emph{maximal} sizes of independent systems of
equations and chains of equations. If the number of unknowns is $n$,
then the maximal size of an independent system is denoted by
$\is(n)$. We use two special symbols $\ub$ and $\infi$ for the
infinite cases: if there are infinite independent systems, then
$\is(n) = \infi$, and if there are only finite but unboundedly large
independent systems, then $\is(n) = \ub$. We extend the order
relation of numbers to these symbols: $k < \ub < \infi$ for every
integer $k$. Similarly the maximal size of a decreasing chain is
denoted by $\dc(n)$, and the maximal size of an increasing chain by
$\ic(n)$.

Often we are interested in the finiteness of $\dc(n)$, or its
asymptotic behaviour when $n$ grows. However, if we are interested
in the exact value of $\dc(n)$, then some technical remarks about
the definition are in order. First, the case $i=0$ means that there
is a solution which is not a solution of the first equation $E_1$;
that is, $E_1$ cannot be a trivial equation like $U = U$. If this
condition was removed, then we could always add a trivial equation
in the beginning, and $\dc(n)$ would be increased by one. Second, we
could add the requirement that there must be a solution which is a
solution of all the equations $E_1, \dots, E_m$, and the definition
would remain the same in the case of free monoids. However, if we
consider free semigroups, then this addition would change the
definition, because then $E_m$ could not be an equation with no
solutions, like $xx = x$ in free semigroups. This would decrease
$\dc(n)$ by one.

\section{Relations Between Systems and Chains}

Independent systems of equations are a well-known topic (see, e.g.,
\cite{HaKaPl02}). Chains of equations have been studied less, so we
prove here some elementary results about them. The following theorem
states the most basic relations between $\is$, $\dc$ and $\ic$.

\begin{theorem} \label{thm:basic}
For every $n$,
\begin{math}
    \is(n) \leq \dc(n), \ic(n) .
\end{math}
If $\dc(n) < \ub$ or $\ic(n) < \ub$, then
\begin{math}
    \dc(n) = \ic(n).
\end{math}
\end{theorem}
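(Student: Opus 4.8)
The plan is to prove both parts from a single structural observation: an independent system is simultaneously a decreasing and an increasing chain, while finite decreasing chains are exactly the reversals of finite increasing chains, so the only possible discrepancy between $\dc(n)$ and $\ic(n)$ lives in the infinite case.

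For the inequality $\is(n) \leq \dc(n), \ic(n)$, I would take an independent system $E_1, E_2, \dots$ (finite or infinite) in its given order and verify directly that it is both a decreasing and an increasing chain. By independence, for each $i$ there is a morphism $h_i$ that solves every $E_j$ with $j \neq i$ but not $E_i$. The decreasing condition at step $i$ is witnessed by $h_{i+1}$, which solves $E_1, \dots, E_i$ but not $E_{i+1}$, so adjoining $E_{i+1}$ strictly shrinks the solution set; the increasing condition at step $i$ is witnessed by $h_i$, which solves $E_{i+1}, E_{i+2}, \dots$ but not $E_i$. Hence an independent system of size $s$ yields a decreasing and an increasing chain of the same size, and tracking the three cases ($s$ a finite integer, $s = \ub$, $s = \infi$) gives the claim.

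For the second part I would first record the reversal correspondence already noted in the excerpt: a finite sequence is a decreasing chain if and only if its reversal is an increasing chain. Consequently the supremum of lengths of finite decreasing chains equals that of finite increasing chains. Assume without loss of generality $\dc(n) < \ub$, so $\dc(n) = k$ for a finite integer $k$: there is no infinite decreasing chain, and finite decreasing chains have maximal length $k$. By reversal, finite increasing chains also have maximal length exactly $k$; in particular $\ic(n) \neq \ub$, so $\ic(n) \in \{k, \infi\}$ and the whole task reduces to ruling out an infinite increasing chain.

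The crux is to show that every finite prefix $E_1, \dots, E_m$ of an infinite increasing chain is itself a finite increasing chain. The key is monotonicity of solution sets under adjoining equations: the inequivalence of the infinite tails $E_i, E_{i+1}, \dots$ and $E_{i+1}, E_{i+2}, \dots$ is witnessed by a morphism that solves the entire tail $E_{i+1}, E_{i+2}, \dots$ yet fails $E_i$, and such a morphism in particular solves the finite tail $E_{i+1}, \dots, E_m$, so it also witnesses the inequivalence of $E_i, \dots, E_m$ and $E_{i+1}, \dots, E_m$. Thus an infinite increasing chain would produce finite increasing chains of every length, contradicting the bound $k$; hence none exists and $\ic(n) = k = \dc(n)$. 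The symmetric case $\ic(n) < \ub$ runs identically, now using the trivial fact that every prefix of an infinite decreasing chain is a finite decreasing chain. I expect this prefix-closure step for \emph{increasing} chains to be the main obstacle: unlike decreasing chains, truncating an increasing chain is not obviously length-preserving, and the argument depends essentially on the monotone behaviour of solution sets.
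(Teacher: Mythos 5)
Your proof is correct, and its two pillars are exactly the paper's: an independent system is simultaneously a decreasing and an increasing chain (via the witnesses $h_i$), and reversal is a bijection between finite decreasing and finite increasing chains. Where you differ is that the paper stops there --- its entire proof of the second claim is the reversal observation --- whereas you correctly identify that reversal by itself only equates the suprema over \emph{finite} chains, so one must still rule out an infinite chain of the opposite type before concluding $\dc(n) = \ic(n)$. Your prefix-closure lemma supplies exactly this missing step: a morphism witnessing the inequivalence of the infinite tails at position $i$ solves, in particular, the finite tail $E_{i+1}, \dots, E_m$, so every finite prefix of an infinite increasing chain is itself an increasing chain of the same length, while prefixes of an infinite decreasing chain are decreasing chains for trivial reasons; hence an infinite chain of either type would force $\dc(n) \geq \ub$ and $\ic(n) \geq \ub$ simultaneously. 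This monotonicity-of-solution-sets device is essentially the same one the paper deploys only later, in the proof of Theorem \ref{thm:cp_dc}, to extract an infinite decreasing chain from a non-compact system. So your write-up buys a fully rigorous treatment of the boundary between $\ub$ and $\infi$, which the paper's two-sentence proof leaves implicit, at the cost of length; nothing in your argument is wrong or superfluous.
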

\begin{proof}
Every independent system of equations is also a decreasing and
increasing chain of equations, regardless of the order of the
equations. This means that
\begin{math}
    \is(n) \leq \dc(n), \ic(n) .
\end{math}

A finite sequence of equations is a decreasing chain if and only if
the reverse of this sequence is an increasing chain. Thus
\begin{math}
    \dc(n) = \ic(n),
\end{math}
if $\dc(n) < \ub$ or $\ic(n) < \ub$.
\end{proof}

A semigroup has the \emph{compactness property} if every system of
equations has an equivalent finite subsystem. Many results on the
compactness property are collected in \cite{HaKaPl02}. In terms of
chains, the compactness property turns out to be equivalent to the
property that every decreasing chain is finite.

\begin{theorem} \label{thm:cp_dc}
A semigroup has the compactness property if and only if $\dc(n) \leq
\ub$ for every $n$.
\end{theorem}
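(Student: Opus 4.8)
The plan is to reformulate both the compactness property and the condition $\dc(n) \le \ub$ in terms of solution sets, using the two facts that two systems are equivalent precisely when they have the same set of solutions, and that enlarging a system can only shrink this set. With this observation in hand, the equivalence to be proved reduces to a translation between the statement ``some infinite system has no finite equivalent subsystem'' and the statement ``there is an infinite decreasing chain''.

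For the direction from compactness to $\dc(n) \le \ub$, I would argue by contradiction. Suppose $\dc(n) = \infi$ for some $n$, so there is an infinite decreasing chain $E_1, E_2, \dots$ in $n$ unknowns. Applying the compactness property to the whole system $\{E_i : i \ge 1\}$ yields an equivalent finite subsystem, which is contained in $\{E_1, \dots, E_N\}$ for some $N$. Then $\{E_1, \dots, E_N\}$ is already equivalent to the full system, and since the solution sets of $\{E_1, \dots, E_N\}$, of $\{E_1, \dots, E_{N+1}\}$, and of the full system are nested between two equal sets, the first two are equal. This makes step $i = N$ of the chain non-strict, contradicting the definition of a decreasing chain.

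For the converse, assume $\dc(n) \le \ub$ for every $n$ and let $S$ be an arbitrary system in $n$ unknowns. I would extract a chain greedily: having selected $E_1, \dots, E_i$ from $S$, if their common solution set already equals that of $S$, I stop and output this finite equivalent subsystem; otherwise there is a morphism that solves $E_1, \dots, E_i$ but not $S$, hence fails some $E_{i+1} \in S$, and appending it produces a strict decrease (the base case $i = 0$ uses the empty system, for which every morphism is a solution, so that $E_1$ is nontrivial). If the process never halts it produces an infinite decreasing chain in $n$ unknowns, i.e.\ $\dc(n) = \infi$, contradicting the hypothesis; hence it halts and delivers the required finite equivalent subsystem.

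The individual steps are short; the part that needs the most care is the greedy extraction in the converse. One must check that each appended equation yields a genuine inequivalence, so that what is built is actually a decreasing chain and not merely a nested sequence, handle the $i = 0$ base case correctly, and keep the entire construction within $n$ unknowns so that the hypothesis $\dc(n) \le \ub$ actually applies. In the forward direction the only subtlety is the monotonicity argument that pins the failure of strictness down to the single index $N$.
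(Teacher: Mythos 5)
Your proof is correct. The forward direction coincides with the paper's: take a finite equivalent subsystem of the infinite chain, bound its indices by $N$, and note that the solution sets of $E_1,\dots,E_N$ and $E_1,\dots,E_{N+1}$ are squeezed between two equal sets, so step $N$ of the chain is not strict. In the converse direction your mechanism differs from the paper's. The paper fixes an enumeration $E_1,E_2,\dots$ of the given system and examines its prefixes: either the prefix solution sets stabilize from some index $N$ on, in which case $E_1,\dots,E_N$ is an equivalent finite subsystem, or they drop at infinitely many indices $i_1<i_2<\dots$, and the equations causing these drops form an infinite decreasing chain. You instead build the chain adaptively: as long as the equations selected so far are not equivalent to the whole system, a witness morphism that solves them but violates the system picks out the next equation. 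The arguments are of comparable length, but your greedy version has two modest advantages: each step carries its own explicit witness morphism, so strictness of the chain never needs a separate verification (in the paper one must still check that the extracted subsequence is a decreasing chain, and as written there is an off-by-one: the chain should consist of the equations $E_{i_k+1}$ that cause the drops, not $E_{i_k}$), and it requires no enumeration of the system, though this is immaterial here since there are only countably many equations over $n$ variables. The paper's version, in exchange, is non-adaptive and avoids the (harmless) use of dependent choice that your step-by-step selection makes.
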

\begin{proof}
Assume first that the compactness property holds. Let $E_1, E_2,
\dots$ be an infinite decreasing chain of equations. As a system of
equations, it is equivalent to some finite subsystem $E_{i_1},
\dots, E_{i_k}$, where $i_1 < \dots < i_k$. But now $E_1, \dots
E_{i_k}$ is equivalent to $E_1, \dots, E_{i_k + 1}$. This is a
contradiction.

Assume then that $\dc(n) \leq \ub$. Let $E_1, E_2, \dots$ be an
infinite system of equations. If there is an index $N$ such that
$E_1, \dots, E_i$ is equivalent to $E_1, \dots, E_{i+1}$ for all
$i \geq N$, then the whole system is equivalent to $E_1, \dots,
E_N$. If there is no such index, then let $i_1 < i_2 < \dots$ be all
indexes such that $E_1, \dots E_{i_k}$ is not equivalent to $E_1,
\dots, E_{i_k + 1}$. But then $E_{i_1}, E_{i_2}, \dots$ is an
infinite decreasing chain, which is a contradiction.
\end{proof}

The next example shows that the values of $\is$, $\dc$ and $\ic$ can
differ significantly.

\begin{example}
We give an example of a monoid where $\is(1) = 1$, $\dc(1) = \ub$
and $\ic(1) = \infi$. The monoid is
\begin{equation*}
    \langle a_1, a_2, \dots
    \ | \ a_i a_j = a_j a_i , \ a_i^{i+1} = a_i^i \rangle .
\end{equation*}
Now every equation on one unknown is of the form $x^i = x^j$. If
$i<j$, then this is equivalent to $x^i = x^{i+1}$. So all
nontrivial equations are, up to equivalence,
\begin{equation*}
    x = 1, \ x^2 = x, \ x^3 = x^2, \ \dots ,
\end{equation*}
and these have strictly increasing solution sets. Thus $\ic(1) =
\infi$, $\dc(1) = \ub$ and $\is(1) = 1$.
\end{example}

\section{Free Monoids}

From now on we will consider free monoids and semigroups. The bounds
related to free monoids are denoted by $\is$, $\dc$ and $\ic$, and
the bounds related to free semigroups, by $\is_+$, $\dc_+$ and
$\ic_+$.

We give some definitions related to word equations and make some
easy observations about the relations between maximal sizes of
independent systems and chains, assuming these are finite.

A solution $h$ is \emph{periodic} if there exists a $t \in S$
such that every $h(x)$, where $x \in \Xi$, is a power of $t$. Otherwise
$h$ is \emph{nonperiodic}. An equation $U=V$ is \emph{balanced} if
every variable occurs as many times in $U$ as in $V$.

The maximal size of an independent system in a free monoid having a
nonperiodic solution is denoted by $\is'(n)$. The maximal size of a
decreasing chain having a nonperiodic solution is denoted by
$\dc'(n)$. Similar notation can be used for free semigroups.

Every independent system of equations $E_1, \dots, E_m$ is also a
chain of equations, regardless of the order of the equations. If the
system has a nonperiodic solution, then we can add an equation that
forces the variables to commute. If the equations in the system are
also balanced, then we can add equations $x_i = 1$ for all variables
$x_1, \dots, x_n$, and thus get a chain of length $m+n+1$. If they
are not balanced, then we can add at least one of these equations.

In all cases we obtain the inequalities $\is'(n) \leq \is(n) \leq
\is'(n) + 1$ and $\dc'(n) + 2 \leq \dc(n) \leq \dc'(n) + n + 1$, as
well as $\is(n) + 1 \leq \dc(n)$ and $\is'(n) \leq \dc'(n)$. In the
case of free semigroups we derive similar inequalities. Thus $\is'$
and $\dc'$ are basically the same as $\is$ and $\dc$, if we are only
interested in their finiteness or asymptotic growth.

It was conjectured by Ehrenfeucht in a language theoretic setting
that the compactness property holds for free monoids. This
conjecture was reformulated in terms of equations in \cite{CuKa83},
and it was proved independently by Albert and Lawrence \cite{AlLa85ehrenfeucht}
and by Guba \cite{Gu86}.

\begin{theorem} \label{thm:compactness}
$\dc(n) \leq \ub$, and hence also $\is(n) \leq \ub$.
\end{theorem}

The proofs are based on Hilbert's basis theorem. The compactness
property means that $\dc(n) \leq \ub$ for every $n$. No better upper
bounds are known, when $n > 2$. Even the seemingly simple question
about the size of $\is'(3)$ is still completely open; the only thing
that is known is that $2 \leq \is'(3) \leq \ub$. The lower bound is
given by the example $xyz=zyx, xyyz=zyyx$.

\section{Three and Four Unknowns}

The cases of three and four variables have been studied in
\cite{Cz08}. The article gives examples showing that $\is'_+(3) \geq
2$, $\dc_+(3) \geq 6$, $\is'_+(4) \geq 3$ and $\dc_+(4) \geq 9$. We
are able to give better bounds for $\dc_+(3)$ and $\dc(4)$.

First we assume that there are three unknowns $x$, $y$, $z$. There
are trivial examples of independent systems of three equations, for
example, $x^2=y, y^2=z, z^2=x$, so $\is_+(3) \geq 3$. There are also
easy examples of independent pairs of equations having a nonperiodic
solution, like $xyz=zyx, xyyz=zyyx$, so $\is'_+(3) \geq 2$.
Amazingly, no other bounds are known for $\is_+(3)$, $\is'_+(3)$,
$\is(3)$ or $\is'(3)$.

{\allowdisplaybreaks The following chain of equations shows that
$\dc(3) \geq 7$:
\begin{align*}
    xyz &= zxy,&
        x&=a, \ y=b, \ z=abab\\
    xy xzy z &= z xzy xy,&
        x&=a, \ y=b, \ z=ab\\
    xz &= zx,&
        x&=a, \ y=b, \ z=1\\
    xy &= yx,&
        x&=a, \ y=a, \ z=a\\
    x &= 1,&
        x&=1, \ y=b, \ z=a\\
    y &= 1,&
        x&=1, \ y=1, \ z=a\\
    z &= 1,&
        x&=1, \ y=1, \ z=1 .
\end{align*}
Here the second column gives a solution which is not a solution of
the equation on the next row but is a solution of all the preceding
equations. Also $\dc_+(3) \geq 7$, as shown by the chain
\begin{align*}
    xxyz &= zxyx,&
        &x=a, \ y=b, \ z=aabaaba\\
    xxyx zy z &= z zy xxyx,&
        &x=a, \ y=b, \ z=aaba\\
    xz &= zx,&
        &x=a, \ y=b, \ z=a\\
    xy &= yx,&
        &x=a, \ y=aa, \ z=a\\
    x &= y,&
        &x=a, \ y=a, \ z=aa\\
    x &= z,&
        &x=a, \ y=a, \ z=a\\
    xx &= x,&
        &\text{no solutions}.
\end{align*}}

If there are three variables, then every independent pair of
equations having a nonperiodic solution consists of balanced
equations (see \cite{HaNo03}). It follows that $\is'(3) + 4 \leq
\dc(3)$. There are also some other results about the structure of
equations in independent systems on three unknowns (see
\cite{CzKa07} and \cite{CzPl09}).

{\allowdisplaybreaks If we add a fourth unknown $t$, then we can
trivially extend any independent system by adding the equation
$t=x$. This gives $\is_+(4) \geq 4$ and $\is'_+(4) \geq 3$. For
chains the improvements are nontrivial. The following chain of
equations shows that $\dc(4) \geq 12$:
\begin{align*}
    xyz &= zxy,&
        x&=a, \ y=b, \ z=abab, \ t=a\\
    xyt &= txy,&
        x&=a, \ y=b, \ z=abab, \ t=abab\\
    xy xzy z &= z xzy xy,&
        x&=a, \ y=b, \ z=ab, \ t=abab\\
    xy xty t &= t xty xy,&
        x&=a, \ y=b, \ z=ab, \ t=ab\\
    xy xzty zt &= zt xzty xy,&
        x&=a, \ y=b, \ z=ab, \ t=1\\
    xz &= zx,&
        x&=a, \ y=b, \ z=1, \ t=ab\\
    xt &= tx,&
        x&=a, \ y=b, \ z=1, \ t=1\\
    xy &= yx,&
        x&=a, \ y=a, \ z=a, \ t=a\\
    x &= 1,&
        x&=1, \ y=a, \ z=a, \ t=a\\
    y &= 1,&
        x&=1, \ y=1, \ z=a, \ t=a\\
    z &= 1,&
        x&=1, \ y=1, \ z=1, \ t=a\\
    t &= 1,&
        x&=1, \ y=1, \ z=1, \ t=1.
\end{align*}}

The next theorem sums up the new bounds given in this section.

\begin{theorem}
$\dc_+(3) \geq 7$ and $\dc(4) \geq 12$.
\end{theorem}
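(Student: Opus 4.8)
The plan is to establish both inequalities by exhibiting explicit decreasing chains of the stated lengths, namely the two chains already displayed above: the seven-equation free-semigroup chain witnesses $\dc_+(3) \geq 7$, and the twelve-equation free-monoid chain witnesses $\dc(4) \geq 12$. By definition, a sequence $E_1, \dots, E_m$ is a decreasing chain once the system $E_1, \dots, E_i$ is inequivalent to $E_1, \dots, E_{i+1}$ for every $i$. Because adjoining an equation can only shrink the solution set, such inequivalence at step $i$ is exhibited by a single morphism that solves $E_1, \dots, E_i$ but not $E_{i+1}$. The morphisms required are precisely those listed in the second column of each display, so the proof reduces to a finite list of substitutions.

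For the twelve-equation chain I would treat the $i$-th row as furnishing a morphism $h_i$ and check two assertions: that $h_i$ satisfies each of $E_1, \dots, E_i$, and that $h_i$ violates $E_{i+1}$. Each assertion is a direct word computation---substitute the displayed values of $x,y,z,t$ into both sides and compare the resulting words. The case $i=0$ merely requires that $E_1$, here $xyz=zxy$, be nontrivial, which is immediate. I would also record that the last morphism, $x=y=z=t=1$, is a common solution of all twelve equations, so the chain stays valid even under the stronger convention of Section 2 that demands a common solution. The seven-equation chain is handled identically, the sole subtlety being its final equation $xx=x$: over a free semigroup this equation is unsatisfiable, which is permitted exactly because there we impose no common solution, and since every morphism violates $xx=x$ the last separating morphism need only satisfy the first six equations.

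The one place where genuine care is needed is in the steps controlled by the long, balanced equations: $xxyz=zxyx$ and $xxyxzyz=zzyxxyx$ in the three-unknown free-semigroup chain, and $xyxzyz=zxzyxy$, $xyxtyt=txtyxy$ and $xyxztyzt=ztxztyxy$ in the four-unknown chain. For these I expect the verification that the relevant morphism violates the equation to be the delicate part, since the two sides are long words sharing a long common prefix before they diverge, so the letter-by-letter comparison must be carried out attentively. The underlying design---equations that interpolate between the fully free stage, the commuting stages, and finally the trivial (or unsatisfiable) stages, each step deleting at least one solution---is already encoded in the displayed columns; once all the substitutions are confirmed, the two bounds follow immediately, and the theorem merely records these computations.
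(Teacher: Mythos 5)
Your proposal is correct and follows essentially the same route as the paper, which proves the theorem precisely by the two displayed chains together with the witness morphisms in their second columns: each row's morphism solves all preceding equations but not the next one, and the bounds follow. Your additional bookkeeping---the $i=0$ case via nontriviality of $E_1$, and the observation that the unsatisfiable final equation $xx=x$ of the free-semigroup chain only requires the last witness to solve the first six equations---makes explicit what the paper leaves implicit, but does not change the argument.
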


\section{Lower Bounds}

In \cite{KaPl96} it is proved that $\is(n) = \Omega(n^4)$ and
$\is_+(n) = \Omega(n^3)$. The former is proved by a construction
that uses $n = 10m$ variables and gives a system of $m^4$ equations.
Thus $\is(n)$ is asymptotically at least $n^4/10000$. We present
here a slightly modified version of this construction. By
''reusing'' some of the unknowns we get a bound that is
asymptotically $n^4/1536$.

\begin{theorem}
If $n = 4m$, then $\is'(n) \geq m^2(m-1)(m-2)/6$.
\end{theorem}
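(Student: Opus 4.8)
The plan is to adapt the $\Omega(n^4)$ construction of Karhum\"aki and Plandowski, economizing on variables so that the bound is governed by the ratio between the number of equations and $n^4$. Since $n = 4m$ I will use exactly $4m$ variables, organized into a few groups of size $m$, and index the equations by a pair consisting of a single ``free'' index $i \in \{1, \dots, m\}$ together with an unordered triple $\{j,k,l\}$ of distinct indices from $\{1, \dots, m\}$. The number of such indices is $m \binom{m}{3} = m^2(m-1)(m-2)/6$, which is exactly the claimed lower bound; so it suffices to produce a single system $\{E_{i,\{j,k,l\}}\}$ of this cardinality, exhibit a nonperiodic solution common to all of its equations, and prove that the system is independent. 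The requirement that $j,k,l$ be \emph{distinct} is precisely the price paid for reusing variables across equations: with only $4m$ variables available, equations whose triples collide would share too much structure, and it is the distinct-coordinate subfamily that remains provably independent.

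Each equation $E_{i,\{j,k,l\}}$ will be a balanced word identity of a fixed shape built from the variable of the first group indexed by $i$ and the three variables indexed by $j,k,l$ from the remaining groups, patterned on the building blocks of the original construction. First I would fix one explicit morphism $h$ into $\{a,b\}^*$ that uses both letters (hence is nonperiodic) and makes the two sides of every $E_{i,\{j,k,l\}}$ literally the same word; for balanced equations of the chosen form this is a routine verification and establishes that the system has a nonperiodic solution, which is what $\is'$ requires.

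The core of the argument is independence: for each target $\alpha = (i,\{j,k,l\})$ I must supply a morphism $h_\alpha$ that fails $E_\alpha$ but solves every other equation. I would obtain $h_\alpha$ by a localization: keep exactly the four variables occurring in $E_\alpha$ mapped to suitably chosen words over $\{a,b\}$ and collapse all other variables to the empty word. The four retained variables are chosen so that the two sides of $E_\alpha$ become distinct words, so $E_\alpha$ fails. For any $\beta \neq \alpha$, the index tuple of $\beta$ differs from that of $\alpha$ in at least one coordinate, so $E_\beta$ is missing at least one of its required variables; the collapse then reduces $E_\beta$ to a trivially true identity $w = w$, so $h_\alpha$ solves $E_\beta$.

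The step I expect to be the main obstacle is exactly this last verification, because the reuse of variables means that a single $h_\alpha$ interacts with a large number of other equations at once. The delicate point is to confirm, by a case analysis on how $\beta$'s index tuple overlaps $\alpha$'s --- same free index but a different triple, the same triple but a different free index, or a triple sharing one or two elements --- that in every case $E_\beta$ really does lose an essential variable under $h_\alpha$ and collapses to a true identity, with \emph{no} residual mismatch surviving the collapse. This is where the distinctness of $j,k,l$ is used crucially, and where the shape of the equations must be designed carefully so that localizing to $E_\alpha$'s variables never accidentally leaves another equation both nontrivial and violated. Checking this bookkeeping, while simultaneously keeping every equation balanced and the base morphism $h$ nonperiodic, is the technical heart of the proof.
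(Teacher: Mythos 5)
Your combinatorial setup matches the paper's: four groups of $m$ variables, equations indexed by one free index together with a $3$-element subset of $\{1,\dots,m\}$, the count $m\binom{m}{3} = m^2(m-1)(m-2)/6$, and independence witnessed by morphisms that erase everything outside the target equation. But the mechanism you propose for independence --- that every non-target equation loses a variable under the localization and therefore ``collapses to a trivially true identity $w=w$'' --- cannot work, and the step you defer as ``the technical heart'' is exactly where it breaks. Consider a non-target equation sharing the free index with the target but whose triple meets the target's triple in one or two elements. Under your localization it does lose variables, but what remains is not a trivial identity: it is a genuine constraint on the retained values. With the natural commutation shape (the product of the three triple variables commutes with the free variable), the requirements become: each single retained triple value, and each product of two of them, must commute with the free value $t_0$, while the product of all three must not. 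This is impossible: the words commuting with a fixed nonempty $t_0$ are exactly the powers of its primitive root, and these are closed under concatenation, so the overlap-one and overlap-two constraints already force the full product to commute, and the target equation cannot fail. The problem is not specific to that shape: one can show that no equation in four variables admits values such that erasing any nonempty subset of the variables yields a solution while the full assignment does not --- and your scheme needs exactly this, since (for large $m$) every nonempty erasure pattern is realized by some non-target equation.

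The paper escapes this obstruction by making each equation contain \emph{ten} variables, not four: $E(i,j,k,l)$ is $x_ix_jx_k\,y_iy_jy_k\,z_iz_jz_k\,t_l = t_l\,x_ix_jx_k\,y_iy_jy_k\,z_iz_jz_k$, so the triple indexes three whole columns $(x_r,y_r,z_r)$ rather than three single variables, and erasures happen only in coordinated blocks. The localization sends $x_r \mapsto ab$, $y_r \mapsto a$, $z_r \mapsto ba$ for $r$ in the target triple, $t_l \mapsto ababa$, and all other variables to $1$. Non-target equations with a different $t$-index do collapse trivially, as in your plan; but non-target equations with the same $t_l$ and triple overlap $s \leq 2$ are satisfied for a nontrivial reason: the identity $(ab)^s a^s (ba)^s = (ababa)^s$ holds for $s < 3$, so the surviving product is a power of $ababa$ and commutes with $t_l$, whereas for the target equation ($s=3$) the identity fails and commutation fails with it. This identity --- which the paper highlights immediately after the proof as the idea behind the whole construction --- is the key ingredient your proposal is missing, and without it (or some substitute playing the same role) the independence verification you postponed cannot be completed.
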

\begin{proof}
We use unknowns $x_i, y_i, z_i, t_i$, where $1 \leq i \leq m$. The
equations in the system are
\begin{equation*}
    E(i,j,k,l): x_i x_j x_k  y_i y_j y_k  z_i z_j z_k  t_l
          = t_l x_i x_j x_k  y_i y_j y_k  z_i z_j z_k ,
\end{equation*}
where $i,j,k,l \in \{1, \dots, m\}$ and $i<j<k$. If $i,j,k,l \in
\{1, \dots, m\}$ and $i<j<k$, then
\begin{align*}
    x_r &= \begin{cases}
    ab, & \text{if $r \in \{i,j,k\}$} \\
    1, & \text{otherwise}
    \end{cases}
    \quad
    y_r = \begin{cases}
    a, & \text{if $r \in \{i,j,k\}$} \\
    1, & \text{otherwise}
    \end{cases}
    \\
    z_r &= \begin{cases}
    ba, & \text{if $r \in \{i,j,k\}$} \\
    1, & \text{otherwise}
    \end{cases}
    \quad
    t_r = \begin{cases}
    ababa, & \text{if $r=l$} \\
    1, & \text{otherwise}
    \end{cases}
\end{align*}
is not a solution of $E(i,j,k,l)$, but is a solution of all the
other equations. Thus the system is independent.
\end{proof}

The idea behind this construction (both the original and the
modified) is that
\begin{math}
    (ababa)^k = (ab)^k a^k (ba)^k
\end{math}
holds for $k<3$, but not for $k=3$. It was noted in \cite{Pl03} that
if we could find words $u_i$ such that
\begin{math}
    (u_1 \dots u_m)^k = u_1^k \dots u_m^k
\end{math}
holds for $k<K$, but not for $k=K$, then we could prove that $\is(n)
= \Omega(n^{K+1})$. However, it has been proved that such words do
not exist for $K \geq 5$ (see \cite{Ho01}), and conjectured that
such words do not exist for $K=4$.

{\allowdisplaybreaks For small values of $n$ it is better to use
ideas from the constructions showing that $\dc(3) \geq 7$ and
$\dc(4) \geq 12$. This gives $\is'(n) \geq (n^2 - 5n + 6)/2$ and
$\dc(n) \geq (n^2 + 3n - 4)/2$. The equations in the system are
\begin{equation*}
    x y  x z_i z_j y  z_i z_j = z_i z_j  x z_i z_j y  x y ,
\end{equation*}
where $i, j \in \{1, \dots, n-2\}$ and $i<j$. The equations in the
chain are
\begin{align*}
    x y z_k &= z_k x y ,\\
    x y  x z_k y  z_k &= z_k x z_k y  x y ,\\
    x y  x z_i z_j y  z_i z_j &= z_i z_j  x z_i z_j y  x y ,\\
    x z_k &= z_k x ,\\
    x y &= y x ,\\
    x &= 1 ,\\
    y &= 1 ,\\
    z_k &= 1 ,
\end{align*}
where $i, j \in \{1, \dots, n-2\}$, $i<j$ and $k \in \{1, \dots,
n-2\}$. Here we should first take the equations on the first row in
some order, then the equations on the second row in some order, and
so on.}

We conclude this section by mentioning a related question. It is
well known that any nontrivial equation on $n$ variables forces a
defect effect; that is, the values of the variables in any solution
can be expressed as products of $n-1$ words (see \cite{HaKa04} for a
survey on the defect effect). If a system has only periodic solutions,
then the system can be said to force a maximal defect effect, so
$\is'(n)$ is the maximal size of an independent system not doing
that. But how large can an independent system be if it forces only
the minimal defect effect, that is, the system has a solution in
which the variables cannot be expressed as products of $n-2$ words?
In \cite{KaPl96} it is proved that there are such systems of size
$\Omega(n^3)$ in free monoids and of size $\Omega(n^2)$ in free
semigroups. Again, no upper bounds are known.

\section{Concluding Remarks and Open Problems}

To summarize, we list a few fundamental open problems about systems
and chains of equations in free monoids.

\begin{enumerate}[{Question} 1:]
\item Is $\is(3)$ finite? \label{q1}
\item Is $\dc(3)$ finite? \label{q2}
\item Is $\is(n)$ finite for every $n$? \label{q3}
\item Is $\dc(n)$ finite for every $n$? \label{q4}
\end{enumerate}

A few remarks on these questions are in order. First we know that
each of these values is at most $\ub$. Second, if the answer to any
of the questions is ''yes'', a natural further question is: What is
an upper bound for this value, or more sharply, what is the best
upper bound, that is, the exact value? For the lower bounds the best
what is known, according to our knowledge, is the following
\begin{enumerate}[{Question} 1:]
\item $\is(3) \geq 3$,
\item $\dc(3) \geq 7$,
\item $\is(n) = \Omega(n^4)$,
\item $\dc(n) = \Omega(n^4)$.
\end{enumerate}
A natural sharpening of Question \ref{q3} (and \ref{q4}) asks
whether these values are exponentially bounded.

A related question to Question \ref{q1} is the following amazing
open problem from \cite{CuKa83} (see, e.g., \cite{Cz08} and
\cite{CzKa07} for an extensive study of it):

\begin{enumerate}[{Question} 5:]
\item Does there exist an independent system of three equations with
    three unknowns having a nonperiodic solution?
\end{enumerate}

As a summary we make the following remarks. As we see it, Question
\ref{q3} is a really fundamental question on word equations or even
on combinatorics on words as a whole. Its intriguity is revealed by
Question \ref{q1}: we do not know the answer even in the case of
three unknowns. This becomes really amazing when we recall that
still the best known lower bound is only 3!

To conclude, we have considered equations over word monoids and
semigroups. All of the questions can be stated in any semigroup, and
the results would be different. For example, in commutative monoids
the compactness property (Theorem \ref{thm:compactness}) holds, but
in this case the value of the maximal independent system of
equations is $\ub$ (see \cite{KaPl96}).

\section*{Note added on June 9, 2015}

When writing the original version of this article,
we were not aware of any previous research on increasing chains.
However, they were defined and studied already in 1999
by Honkala \cite{Ho99} (in the case of free monoids).
They were called descending chains in that article.
Decreasing chains were called ascending chains
and Theorem \ref{thm:cp_dc} was proved in the case of free monoids.
Most of the paper was devoted to
descending chains and test sets.

The following conjecture, which we state here using our notation,
was given in \cite{Ho99}:
In free monoids $\ic(n) \leq \ub$ for all $n$.
This appears to be a very interesting and difficult problem.
Proofs of Ehrenfeucht's conjecture are ultimately based on the fact that
ideals in polynomial rings satisfy the ascending chain condition.
As pointed out in \cite{Ho99},
the same is not true for the descending chain condition,
so the above conjecture
could be expected to be significantly more difficult to prove
than Ehrenfeucht's conjecture was.
Of course,
if $\dc(n) < \ub$, then $\ic(n) = \dc(n)$ by Theorem \ref{thm:basic}.

%\bibliographystyle{plain}
%\bibliography{../bibtex/ref}

\end{document}